\documentclass[a4paper,11pt,oneside]{amsart}
\pdfoutput=1

\usepackage[utf8]{inputenc}
\usepackage{bm}
\usepackage{mathtools,amssymb}
\usepackage{esint}
\usepackage{hyperref}
\hypersetup{
    colorlinks=true,
    linkcolor=black,
    filecolor=black,      
    urlcolor=cyan,
    citecolor=black
    }
\usepackage{tikz}
\usepackage{dsfont}
\usepackage{relsize}
\usepackage{url}
\urlstyle{same}
\usepackage{xcolor}
\usepackage{graphicx}
\usepackage{mathrsfs}
\usepackage[shortlabels]{enumitem}
\usepackage{lineno}
\usepackage{amsmath}
\usepackage{enumitem}
\usepackage{amsthm} 
\usepackage{verbatim}
\usepackage{dsfont}

\allowdisplaybreaks

\mathtoolsset{showonlyrefs}

\graphicspath{{images/}}

\newtheorem{theorem}{Theorem}[section]

\newtheorem{remark}[theorem]{Remark}

\title[Inverse fractional conductivity problem]{Counterexamples to uniqueness in the inverse fractional conductivity problem with partial data}
\keywords{Fractional Laplacian, fractional gradient, Calderón problem}
\subjclass[2020]{Primary 35R30; secondary 26A33, 42B37, 46F12}

\author{Jesse Railo}
\thanks{Department of Pure Mathematics and Mathematical Statistics, University of
Cambridge (\href{mailto:jr891@cam.ac.uk}{jr891@cam.ac.uk})}
\address{Department of Pure Mathematics and Mathematical Statistics, University of
Cambridge, Cambridge CB3 0WB, UK}
\email{jr891@cam.ac.uk}
\author{Philipp Zimmermann}
\thanks{Department of Mathematics, ETH Zurich (\href{mailto:philipp.zimmermann@math.ethz.ch}{philipp.zimmermann@math.ethz.ch})}
\address{Department of Mathematics, ETH Zurich, Z\"urich, Switzerland}
\email{philipp.zimmermann@math.ethz.ch}
\date{\today}

\newcommand{\R}{{\mathbb R}}
\newcommand{\Z}{{\mathbb Z}}



\newcommand{\schwartz}{\mathscr{S}}

\newcommand{\tempered}{\mathscr{S}^{\prime}}

\newcommand{\fourier}{\mathcal{F}}
\newcommand{\ifourier}{\mathcal{F}^{-1}}
\newcommand{\vev}[1]{\left\langle#1\right\rangle}

\newcommand{\Hcirc}{\overset{\hspace{-0.08cm}\circ}{H^s}}

\newcommand{\norm}[1]{\lVert #1 \rVert}
\newcommand{\abs}[1]{\left\lvert #1 \right\rvert}
\DeclareMathOperator{\Div}{div} 
\DeclareMathOperator{\supp}{supp} 

\begin{document}

\maketitle
\begin{abstract} We construct counterexamples for the partial data inverse problem for the fractional conductivity equation in all dimensions on general bounded open sets. In particular, we show that for any bounded domain $\Omega \subset \R^n$ and any disjoint open sets $W_1,W_2 \Subset \R^n \setminus \overline{\Omega}$ there always exist two positive, bounded, smooth, conductivities $\gamma_1,\gamma_2$, $\gamma_1 \neq \gamma_2$, with equal partial exterior Dirichlet-to-Neumann maps $\Lambda_{\gamma_1}f|_{W_2} = \Lambda_{\gamma_2}f|_{W_2}$ for all $f \in C_c^\infty(W_1)$. The proof uses the characterization of equal exterior data from another work of the authors in combination with the maximum principle of fractional Laplacians. The main technical difficulty arises from the requirement that the conductivities should be strictly positive and have a special regularity property $\gamma_i^{1/2}-1 \in H^{2s,\frac{n}{2s}}(\R^n)$ for $i=1,2$. We also provide counterexamples on domains that are bounded in one direction when $n \geq 4$ or $s \in (0,n/4]$ when $n=2,3$ using a modification of the argument on bounded domains.
\end{abstract}

\section{Introduction}

The Calderón problem for the conductivity equation is the mathematical model of electrical impedance tomography (EIT) \cite{UHL-electrical-impedance-tomography}. One considers the conductivity equation 
\begin{equation}
\label{eq: cond eq}
    \begin{split}
            \Div(\gamma\cdot\nabla u)&= 0\quad\text{in}\quad\Omega,\\
            u&= f\quad\text{on}\quad \partial\Omega,
        \end{split}
\end{equation}
where $\gamma$ models the conductivity of a medium. In the inverse conductivity problem, one sets a voltage $f$ on the boundary $\partial \Omega$ and measures the current $\Lambda_\gamma f := \nu \cdot \gamma\nabla u_f|_{\partial \Omega}$ on the boundary where $u_f$ is the unique solution to the problem \eqref{eq: cond eq} with the boundary condition $f$ and the vector field $\nu$ is the outer unit normal of $\partial \Omega$. The Calderón problem is to determine $\gamma$ or its properties from the knowledge $f \mapsto \Lambda_\gamma f$.

In this article, we study the analogous problem for the fractional conductivity equation \eqref{eq: frac cond eq}. The fractional conductivity equation models a nonlocal diffusion, which is defined in the whole Euclidean space $\R^n$ rather than only locally \cite{NonlocDiffusion}. The fractional conductivity equation also shows up in weighted long jump random walk models \cite{covi2019inverse-frac-cond}. In these random walk models, the larger the conductivity at some location, the higher the probability for a particle to jump from that location.

We say that an open set $\Omega_\infty \subset\R^n$ of the form $\Omega_\infty=\R^{n-k}\times \omega$, where $n\geq k\geq 1$ and $\omega \subset \R^k$ is a bounded open set, is a \emph{cylindrical domain}. We say that an open set $\Omega \subset \R^n$ is \emph{bounded in one direction} if there exists a cylindrical domain $\Omega_\infty \subset \R^n$ and a rigid Euclidean motion $A(x) = Lx + x_0$, where $L$ is a linear isometry and $x_0 \in \R^n$, such that $\Omega \subset A\Omega_\infty$. Fractional Calderón problems and Poincaré inequalities for the fractional Laplacians in Bessel potential spaces were recently studied in such domains in \cite{RZ2022unboundedFracCald}.

We assume that $0<s<\min(1,n/2)$, $\Omega\subset \R^n$ is generally an open set which is bounded in one direction and denote by $\Omega_e\vcentcolon =\R^n\setminus\overline{\Omega}$ the exterior of $\Omega$. If $\gamma \in L^\infty(\R^n)$ is a positive conductivity, then we denote by $\Theta_{\gamma}(x,y)\vcentcolon =\gamma^{1/2}(x)\gamma^{1/2}(y)\mathbf{1}_{n\times n}$ for $x,y\in\R^n$ the conductivity matrix and by $m_{\gamma}\vcentcolon =\gamma^{1/2}-1$ the background deviation. The fractional gradient is defined for all sufficiently regular functions by the formula
    \[
        \nabla^su(x,y)=\sqrt{\frac{C_{n,s}}{2}}\frac{u(x)-u(y)}{|x-y|^{n/2+s+1}}(x-y)
    \]
and $\Div_s$ denotes its adjoint operator. In particular, $\Div_s(\nabla^su) = (-\Delta)^su$ in the weak sense for all $u \in H^s(\R^n)$.

We say that $u\in H^s(\R^n)$ is a weak solution to the fractional conductivity equation
\begin{equation}
\label{eq: frac cond eq}
    \begin{split}
            \Div_s(\Theta_{\gamma}\nabla^s u)&= F\quad\text{in}\quad\Omega,\\
            u&= f\quad\text{in}\quad\Omega_e,
        \end{split}
\end{equation}
where $f\in H^s(\R^n)$, $F\in(\tilde{H}^s(\Omega))^*$, if $u-f\in\tilde{H}^s(\Omega)$ and there holds
\begin{equation}
    B_{\gamma}(u,v)\vcentcolon = \int_{\R^{2n}}\Theta_{\gamma}\nabla^su\cdot\nabla^sv\,dxdy=F(v)
\end{equation}
for all $v\in \tilde{H}^s(\Omega)$. If $\gamma\geq \gamma_0>0$ and $m_{\gamma}\in H^{2s,\frac{n}{2s}}(\R^n)$, then the exterior Dirichlet-to-Neumann (DN) map related to the fractional conductivity equation \eqref{eq: frac cond eq} is the bounded linear operator $\Lambda_{\gamma}\colon X\to X^*$ given by 
        \[
        \begin{split}
            \langle \Lambda_{\gamma}f,g\rangle \vcentcolon =B_{\gamma}(u_f,g)
        \end{split}
        \]
        where $u_f\in H^s(\R^n)$ is the unique solution to the homogeneous fractional conductivity equation with exterior value $f$ and $X\vcentcolon = H^s(\R^n)/\Tilde{H}^s(\Omega)$ is the (abstract) trace space (see~\cite[Lemma 8.10]{RZ2022unboundedFracCald}). 

Now the inverse problem related to the fractional conductivity equation consists in showing that two sufficiently nice conductivities $\gamma_1,\gamma_2$ coincide in $\R^n$ if the corresponding DN maps satisfy $\Lambda_{\gamma_1}f|_{W_2}=\Lambda_{\gamma_2}f|_{W_2}$ for all $f\in C_c^{\infty}(W_1)$ and some nonempty open sets $W_1,W_2\subset \Omega_e$. In our recent article, we have shown the following theorem. The proof uses ideas from the works \cite{covi2019inverse-frac-cond,GSU20,RS-fractional-calderon-low-regularity-stability} in combination with the exterior determination arguments developed in \cite{RZ2022unboundedFracCald} for globally defined fractional conductivities.

\begin{theorem}[{\cite[Theorem 2.8]{RZ2022unboundedFracCald}}]
\label{thm: characterization of uniqueness}
    Let $\Omega\subset \R^n$ be an open set which is bounded in one direction and $0<s<\min(1,n/2)$. Assume that $\gamma_1,\gamma_2\in L^{\infty}(\R^n)$ with background deviations $m_1,m_2$ satisfy $\gamma_1(x),\gamma_2(x)\geq \gamma_0>0$ and $m_1,m_2\in H^{2s,\frac{n}{2s}}(\R^n)$. Moreover, assume that $m_0\vcentcolon =m_1-m_2\in H^s(\R^n)$ and $W_1,W_2\subset\Omega_e$ are nonempty open sets with
    \begin{equation}\label{eq:conductivitysupportassumptions}
         (\supp(m_1)\cup\supp(m_2))\cap (W_1\cup W_2)=\emptyset.
    \end{equation}
Then the following statements hold:
\begin{enumerate}[(i)]
    \item\label{item 1 characterization of uniqueness} If $W_1\cap W_2\neq \emptyset$, then  $\left.\Lambda_{\gamma_1}f\right|_{W_2}=\left.\Lambda_{\gamma_2}f\right|_{W_2}$ for all $f\in C_c^{\infty}(W_1)$ if and only if $\gamma_1=\gamma_2$ in $\R^n$.
    \item\label{item 2 characterization of uniqueness} If $W_1\cap W_2=\emptyset$, then $\left.\Lambda_{\gamma_1}f\right|_{W_2}=\left.\Lambda_{\gamma_2}f\right|_{W_2}$ for all $f\in C_c^{\infty}(W_1)$ if and only if $m_1-m_2$ is the unique solution of
    \begin{equation}
    \label{eq: PDE uniqueness cond eq main thm}
        \begin{split}
            (-\Delta)^sm-\frac{(-\Delta)^sm_1}{\gamma_1^{1/2}}m&=0\quad\text{in}\quad \Omega,\\
            m&=m_0\quad\text{in}\quad \Omega_e.
        \end{split}
    \end{equation}
\end{enumerate}
\end{theorem}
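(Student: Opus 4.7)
The plan is to reduce the fractional conductivity equation to a fractional Schrödinger equation through the Liouville-type substitution $v_i := \gamma_i^{1/2} u_i$, apply a Ghosh-Salo-Uhlmann-type uniqueness result to the reduced problem, and then translate back. A direct computation, using $(-\Delta)^s \gamma_i^{1/2} = (-\Delta)^s m_i$ (since $(-\Delta)^s 1 = 0$), shows that $v_i \in H^s(\R^n)$ solves
\[
((-\Delta)^s + q_i) v_i = 0 \quad\text{in}\quad \Omega, \qquad q_i := \frac{(-\Delta)^s m_i}{\gamma_i^{1/2}}.
\]
The support condition \eqref{eq:conductivitysupportassumptions} gives $\gamma_i^{1/2} \equiv 1$ on $W_1 \cup W_2$; hence, for any $f \in C_c^\infty(W_1)$ one has $\gamma_i^{1/2} f = f$ on $\R^n$, so $v_i$ and $u_i$ share the same exterior data. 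A short bilinear-form calculation then identifies the restriction of the conductivity DN map $\Lambda_{\gamma_i}$ to $C_c^\infty(W_1) \times C_c^\infty(W_2)$ with the corresponding restriction of the Schrödinger DN map $\Lambda_{q_i}^{S}$.

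Under the hypothesis $\Lambda_{\gamma_1} f|_{W_2} = \Lambda_{\gamma_2} f|_{W_2}$ for all $f \in C_c^\infty(W_1)$, this identification yields equality of the Schrödinger DN maps on the same data. The fractional Ghosh-Salo-Uhlmann uniqueness theorem, in the form for domains bounded in one direction from \cite{RZ2022unboundedFracCald}, applies to both intersecting and disjoint nonempty open sets and forces $q_1 = q_2 =: q$ in $\Omega$. Subtracting the defining relations $(-\Delta)^s m_i = q \gamma_i^{1/2}$ and writing $m := m_1 - m_2$, one obtains
\[
(-\Delta)^s m - q m = 0 \quad\text{in}\quad \Omega, \qquad m|_{\Omega_e} = m_0,
\]
which is exactly \eqref{eq: PDE uniqueness cond eq main thm}. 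This establishes the forward implication in (ii); the converse is obtained by reversing the Liouville identification, since once $m$ solves the PDE the potential $q$ serves both reduced Schrödinger problems, forcing equality of the DN maps on the specified test pairs.

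For part (i), the extra assumption $W_1 \cap W_2 \neq \emptyset$ enables the exterior determination argument from \cite{RZ2022unboundedFracCald}: the overlap provides the kind of single-open-set DN data required to combine Runge approximation with the maximum principle for the fractional Laplacian, determining the background deviations on the entire exterior and forcing $m_0 \equiv 0$ on $\Omega_e$. Substituting into the PDE above, $m \in \tilde{H}^s(\Omega)$ solves a fractional Schrödinger equation with vanishing exterior data, and well-posedness of the Dirichlet problem (equivalently, the strong unique continuation principle for $(-\Delta)^s$) forces $m \equiv 0$, i.e.\ $\gamma_1 = \gamma_2$ throughout $\R^n$.

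The main technical obstacle is keeping all these reductions consistent with the regularity framework. The assumption $m_i \in H^{2s, n/(2s)}(\R^n)$ together with $\gamma_i \geq \gamma_0 > 0$ is calibrated precisely so that $q_i$ acts as a bounded multiplier between the appropriate Bessel potential spaces, making the Schrödinger bilinear form well-defined and coercive on the one-direction-bounded domain $\Omega$ after invoking the Poincaré-type inequalities of \cite{RZ2022unboundedFracCald}. The bulk of the careful work lies in justifying the Liouville identification of DN maps at only $H^s$ regularity of solutions and in excluding zero as a Dirichlet eigenvalue of the Schrödinger operator, so that the reduced problem is genuinely equivalent to the original one on the specified test pairs.
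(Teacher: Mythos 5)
First, a remark on the ground truth: the paper does not prove this statement at all --- it is imported verbatim from \cite[Theorem 2.8]{RZ2022unboundedFracCald}, and the present article only records the strategy in one sentence (fractional Liouville reduction, Ghosh--Salo--Uhlmann interior determination, exterior determination). Your treatment of part \ref{item 2 characterization of uniqueness} follows exactly that strategy and is essentially sound: the Liouville identity $B_{\gamma}(u,v)=B_{q}(\gamma^{1/2}u,\gamma^{1/2}v)$ with $q=(-\Delta)^s m_\gamma/\gamma^{1/2}$, combined with $\gamma_i^{1/2}\equiv 1$ on $W_1\cup W_2$, identifies the partial conductivity data with partial Schr\"odinger data; the GSU argument (which indeed tolerates disjoint exterior sets) gives $q_1=q_2$ in $\Omega$; and the algebra $(-\Delta)^s m=q_1\gamma_1^{1/2}-q_1\gamma_2^{1/2}=q_1 m$ produces \eqref{eq: PDE uniqueness cond eq main thm}, with the converse obtained by reversing the computation. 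The well-posedness issue you flag at the end is resolved by noting that the Liouville identity transfers coercivity from $B_\gamma$ to $B_q$, so zero is not a Dirichlet eigenvalue and ``the unique solution'' makes sense.

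The genuine gap is in part \ref{item 1 characterization of uniqueness}. You claim the overlap $W_1\cap W_2\neq\emptyset$ lets exterior determination recover ``the background deviations on the entire exterior'', forcing $m_0\equiv 0$ on $\Omega_e$. That cannot be the mechanism: exterior determination is a local argument recovering the conductivity only on the measurement set itself, and there both conductivities already equal one by hypothesis \eqref{eq:conductivitysupportassumptions}, so it yields nothing new; your argument never explains how data supported in $W_1\cup W_2$ could control $m_1-m_2$ on $\Omega_e\setminus(W_1\cup W_2)$, which is precisely the content of part \ref{item 1 characterization of uniqueness}. (The maximum principle you invoke belongs to the counterexample construction in Section \ref{sec: counterexamples} of this paper, not to the uniqueness theorem.) The step that actually exploits the overlap is a unique continuation argument: when $W_1\cap W_2\neq\emptyset$, equality of the partial data yields, beyond the interior equation for $m$, that the Neumann-type datum $(-\Delta)^s m-q_1m$ vanishes on the nonempty open set $W_1\cap W_2$; since $m=m_1-m_2=0$ there by the support hypothesis, the zeroth-order term drops and one obtains $m=(-\Delta)^s m=0$ on an open set, whence the UCP for the fractional Laplacian forces $m\equiv0$ in $\R^n$, i.e.\ $\gamma_1=\gamma_2$. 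Without this (or an equivalent) step your proof of \ref{item 1 characterization of uniqueness} does not close; note in particular that $q_1$ itself need not vanish on $W_1\cap W_2$, since $(-\Delta)^s m_1$ is nonlocal, so the cancellation really comes from $m$, not from $q_1$.
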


In particular, this theorem shows that if the measurement sets $W_1,W_2\subset \Omega_e$ have nonempty intersection, then the inverse problem is uniquely solvable (see the very recent work \cite[Theorems 1.1 and 1.3]{CRZ22} for a general uniqueness result without assuming that the conductivities are constant in the sets $W_1$ and $W_2$). This leads to the question whether one can actually construct a counterexample in the situation when the measurement sets are disjoint. The geometric setting of Theorem \ref{thm: characterization of uniqueness} \ref{item 2 characterization of uniqueness} is illustrated in Figure~ \ref{fig: Geometric setting 1} below.
 \begin{figure}[!ht]
    \centering
    \begin{tikzpicture}
    \draw [cyan, xshift=4cm]   (0,-1)--(1.5,-1)--(1.5,1.75)--(2.5,1.75)--(2.5,-0.25)--(4,-0.25)-- (4,0.75)--(6,0.75)-- (6,2) --(7,2)-- (7,0) --(10,0)--(10,-2);
    \draw [cyan, xshift=4cm]  (0,-1) --(0,-2.5)--(1.5,-2.5)--(1.5,-5.25)--(2.5,-5.25)--(2.5,-3.25)--(4,-3.25)-- (4,-4.25)--(6,-4.25)-- (6,-5.5) --(7,-5.5)-- (7,-3.5) --(10,-3.5)--(10,-2);
    \fill[cyan!5, xshift=4cm] (0,-1)--(1.5,-1)--(1.5,1.75)--(2.5,1.75)--(2.5,-0.25)--(4,-0.25)-- (4,0.75)--(6,0.75)-- (6,2) --(7,2)-- (7,0) --(10,0)--(10,-3.5)-- (7,-3.5)--(7,-5.5)-- (6,-5.5)--(6,-4.25)-- (4,-4.25)--(4,-3.25)--(2.5,-3.25)--(2.5,-5.25)--(1.5,-5.25)--(1.5,-2.5)--(0,-2.5)-- (0,-1);
    \filldraw[color=green!50, fill=green!10, xshift=2.5cm,yshift=-1cm](2,1.25) circle (0.7);
    \filldraw[color=blue!50, fill=blue!5, xshift=12cm, yshift=-2cm] (3,-2.5) ellipse (1 and 0.75);
    \node[xshift=12cm, yshift=-2cm] at (3,-2.5) {$\raisebox{-.35\baselineskip}{\Large\ensuremath{W_2}}$};
    \node[xshift=2.5cm, yshift=-1cm] at (2,1.25) {$\raisebox{-.35\baselineskip}{\Large\ensuremath{W_1}}$};
    \node[xshift=-1.4cm] at (6,-1.75) {$\raisebox{-.35\baselineskip}{\Large\ensuremath{\Omega}}$};
    \filldraw [color=orange!80, fill = orange!5, xshift=8cm, yshift=-2cm,opacity=0.8] plot [smooth cycle] coordinates {(-2,0.75)(-2,3) (-1,1) (3,1.5) (6,-3.5) (3,-1) (-3,-0.25)};
     \filldraw [color=red!60, fill = red!5, xshift=10cm, yshift=-2cm, opacity=0.8] plot [smooth cycle] coordinates {(0,3) (0,4.5) (1,4) (2.2,4) (2.2,1.8) (5.6,1.65) (4, 0.5) (3.65, -1.3)(3.1,-0.7) (1,0)};
    \node[xshift=2cm] at (7,-1.75) {$\raisebox{-.35\baselineskip}{\Large\ensuremath{\supp\,m_1}}$};
    \node[xshift=2cm] at (10.7,-1.05) {$\raisebox{-.35\baselineskip}{\Large\ensuremath{\supp\,m_2}}$};
\end{tikzpicture}
    \caption{A graphical illustration of a possible geometric setting of Theorem~\ref{thm: characterization of uniqueness}. Here $\Omega\subset\R^n$ is taken for simplicity to be a bounded domain and the measurements are performed in the disjoint nonempty open subsets $W_1,W_2\subset \Omega_e$. Moreover, the supports of the background deviations $m_1,m_2$, which are represented in orange and red, respectively, do not necessarily coincide in the exterior $\Omega_e$ and can even be disjoint as illustrated above. In this case uniqueness may be lost.}
    \label{fig: Geometric setting 1}
\end{figure}
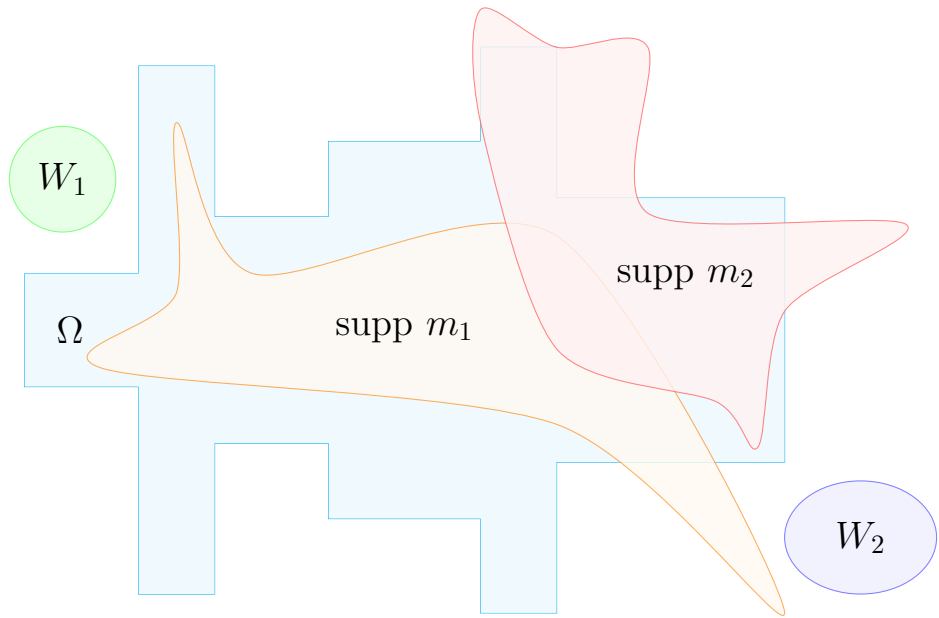

We give a positive answer to this question (the question remains open if $W_1$ or $W_2$ has limit points on the boundary $\partial \Omega$) and prove the following two theorems in this article.

\begin{theorem}\label{thm: counterexample}
    For any open bounded set $\Omega\subset \R^n$, $ 0<s<\min(1,n/2)$ and nonempty open disjoint sets $W_1,W_2\Subset\Omega_e$ there exist two different conductivities $\gamma_1,\gamma_2\in L^{\infty}(\R^n) \cap C^\infty(\R^n)$ satisfying the assumptions of Theorem~\ref{thm: characterization of uniqueness} and $\left.\Lambda_{\gamma_1}f\right|_{W_2}=\left.\Lambda_{\gamma_2}f\right|_{W_2}$ for all $f\in C_c^{\infty}(W_1)$.
\end{theorem}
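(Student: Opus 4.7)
The strategy is to apply the backward direction of the second statement of Theorem~\ref{thm: characterization of uniqueness}: it suffices to construct admissible background deviations $m_1, m_2$ whose global difference $m := m_1 - m_2$ satisfies the fractional Schr\"odinger equation~\eqref{eq: PDE uniqueness cond eq main thm}, which then forces the partial DN maps to coincide. Because the PDE is linear and the freedom is in choosing $m_1$ together with the exterior data, I will pick $m_1$ first and then solve for $m$.

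Stage~1: fix a nonzero $\varphi \in C_c^\infty(\R^n)$ supported in a compact set $K \Subset \Omega_e \setminus \overline{W_1 \cup W_2}$ and set $m_1 := \eta \varphi$ for a small parameter $\eta > 0$. Then $\gamma_1 := (1+m_1)^2 \in C^\infty(\R^n)$ is smooth and bounded from below, $m_1 \in C_c^\infty(\R^n) \subset H^{2s, n/(2s)}(\R^n)$, and $\supp m_1 \cap (W_1 \cup W_2) = \emptyset$; moreover the potential $q := (-\Delta)^s m_1 / \gamma_1^{1/2}$ is a globally smooth bounded function whose relevant norms scale linearly with $\eta$. Stage~2: fix a nonzero $g \in C_c^\infty(\Omega_e \setminus \overline{W_1 \cup W_2})$; for $\eta$ small enough that the bilinear form associated to $(-\Delta)^s - q$ is coercive on $\tilde{H}^s(\Omega)$, Lax--Milgram yields a unique $m \in H^s(\R^n)$ solving $(-\Delta)^s m - q m = 0$ in $\Omega$ with $m = g$ in $\Omega_e$, together with the estimate $\|m\|_{H^s(\R^n)} \lesssim \|g\|_{H^s(\R^n)}$ uniformly in small $\eta$. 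Stage~3: define $m_2 := m_1 - m$ and $\gamma_2 := (1+m_2)^2$; by construction, $m_1 - m_2 = m$ solves~\eqref{eq: PDE uniqueness cond eq main thm}.

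The hypotheses of Theorem~\ref{thm: characterization of uniqueness} are then verified as follows. The support condition $\supp m_2 \cap (W_1 \cup W_2) = \emptyset$ is immediate: on $W_1 \cup W_2$ both $m_1$ and $g$ vanish by choice, hence $m_2 = 0$ there. The inequality $\gamma_1 \neq \gamma_2$ follows because $m|_{\Omega_e} = g \not\equiv 0$. Positivity $\gamma_2 \geq \gamma_0' > 0$ is obtained from an $L^\infty$ control of $m$, via a fractional maximum principle for the small potential $q$ together with the choice of small $\eta$ and small $\|g\|_\infty$, which forces $|m_2| < 1/2$ globally.

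The genuine technical difficulty, already flagged in the abstract, is showing that $m_2 \in H^{2s, n/(2s)}(\R^n)$ and $\gamma_2 \in C^\infty(\R^n)$; since $m_1 \in C_c^\infty$, both reduce to regularity and smoothness of $m$. Interior smoothness inside $\Omega$ and inside $\Omega_e$ follows by bootstrapping with the smooth coefficient $q$ and smooth exterior data $g$, while decay of $m$ at infinity comes from the compact support of $g$ combined with the kernel decay of the Riesz potential $(-\Delta)^{-s}$. The subtle issue is the behaviour across $\partial \Omega$: the standard Dirichlet problem for $(-\Delta)^s$ on bounded domains produces solutions with a $d(\cdot, \partial \Omega)^s$ boundary singularity that generically obstructs membership in the critical Bessel potential space $H^{2s, n/(2s)}(\R^n)$. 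I would attack this either by a judicious choice of $g$, for instance arising as the exterior restriction of a globally smooth function whose fractional Laplacian inside $\Omega$ can be matched via an antilocal Runge-type approximation argument, or by sharp Calder\'on--Zygmund estimates at the critical exponent $p = n/(2s)$, which are tailor-made for the regularity class demanded by Theorem~\ref{thm: characterization of uniqueness}.
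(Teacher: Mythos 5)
Your overall skeleton is the right one --- invoke the backward direction of Theorem~\ref{thm: characterization of uniqueness}~\ref{item 2 characterization of uniqueness}, choose $m_1$ and exterior data supported away from $W_1\cup W_2$, solve \eqref{eq: PDE uniqueness cond eq main thm} for $m$, and set $m_2:=m_1-m$ --- and you have correctly located the crux: the solution of the exterior Dirichlet problem generically behaves like $d(\cdot,\partial\Omega)^s$ near $\partial\Omega$, which obstructs membership in the critical space $H^{2s,\frac{n}{2s}}(\R^n)$ (for $n\geq 2$ one would need roughly $2s< s+\tfrac{2s}{n}$, which fails). But at exactly this point your argument stops being a proof. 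Neither of your two proposed remedies is carried out, and neither is likely to work as stated: sharp Calder\'on--Zygmund estimates at the exponent $p=n/(2s)$ cannot remove a boundary singularity that is genuinely present in the solution, and a Runge-type approximation gives density of traces, not exact solvability by a globally smooth function with the prescribed exterior value. Worse, the theorem assumes only that $\Omega$ is an arbitrary bounded open set, so there is no boundary regularity available to run any of the boundary estimates you are implicitly relying on; your Stage~2 coercivity-plus-bootstrap picture already presupposes more structure than is given.

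The paper's resolution is an elementary trick that sidesteps boundary regularity entirely: take $\gamma_1\equiv 1$ (so $q=0$ and no smallness argument is needed), solve $(-\Delta)^s\tilde m_2=0$ on an \emph{enlarged} smooth set $\Omega'$ with $\overline{\Omega}_{2\epsilon}\subset\Omega'\subset\Omega_{3\epsilon}$ and nonnegative exterior datum $\eta\in C_c^\infty(\omega_{3\epsilon})$, and then set $m_2:=\rho_\epsilon\ast\tilde m_2$. Because mollification by a radial kernel commutes with $(-\Delta)^{s/2}$ and only ``sees'' the equation on $\Omega_{2\epsilon}\subset\Omega'$, the mollified function is still $s$-harmonic in the smaller set $\Omega$; at the same time it is $C_c^\infty(\R^n)$, so membership in $H^s(\R^n)\cap H^{2s,\frac{n}{2s}}(\R^n)$ and smoothness of $\gamma_2$ are free, and positivity $\gamma_2^{1/2}=1+m_2\geq 1$ follows from the maximum principle applied to $\tilde m_2$ with $\eta\geq 0$ rather than from any smallness. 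I would encourage you to rework your construction around this enlarge-then-mollify idea; without it, the critical regularity claim in your Stage~3 is an unproved assertion and the proof is incomplete.
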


\begin{theorem}\label{thm: counterexample 2}
    For any open set $\Omega\subset \R^n$ which is bounded in one direction, $0<s<\min(1,n/2)$ (with $s\leq n/4$ when $n=2,3$) and nonempty open disjoint sets $W_1,W_2\Subset\Omega_e$ there exist two different conductivities $\gamma_1,\gamma_2\in L^{\infty}(\R^n) \cap C^\infty(\R^n)$ satisfying the assumptions of Theorem~\ref{thm: characterization of uniqueness} and $\left.\Lambda_{\gamma_1}f\right|_{W_2}=\left.\Lambda_{\gamma_2}f\right|_{W_2}$ for all $f\in C_c^{\infty}(W_1)$.\footnote{This theorem was recently generalized by the authors to cover also the cases $n=2,3$ and $n/4 < s < 1$ when $m_1,m_2 \in H^{s,n/s}(\R^n)$ instead of $H^{2s,\frac{n}{2s}}(\R^n)$ \cite[Theorem 1.6]{RZ22LowReg}.}
\end{theorem}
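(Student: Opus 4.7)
The plan is to invoke Theorem~\ref{thm: characterization of uniqueness}~\ref{item 2 characterization of uniqueness} and reduce the problem to producing two distinct background deviations $m_1 \neq m_2$ satisfying that theorem's hypotheses such that $m_0 := m_1 - m_2$ is the unique solution of the nonlocal PDE~\eqref{eq: PDE uniqueness cond eq main thm}. Following the strategy of Theorem~\ref{thm: counterexample}, I would take $\gamma_2 \equiv 1$ (so that $m_2 \equiv 0$), collapsing the task to finding a single suitable $m_1$. A direct computation shows that, with $m_2 = 0$, the PDE is satisfied by $m_0 = m_1$ precisely when $(-\Delta)^s m_1 = 0$ in $\Omega$: then $V_1 := (-\Delta)^s m_1 / \gamma_1^{1/2} \equiv 0$ in $\Omega$, and \eqref{eq: PDE uniqueness cond eq main thm} reduces to the homogeneous fractional Dirichlet problem $(-\Delta)^s m = 0$ in $\Omega$, $m = m_1$ in $\Omega_e$. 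The maximum principle for fractional Laplacians on open sets bounded in one direction (as developed in~\cite{RZ2022unboundedFracCald}) then ensures uniqueness, and $m_1$ itself is the unique solution; consequently Theorem~\ref{thm: characterization of uniqueness}~\ref{item 2 characterization of uniqueness} yields $\Lambda_{\gamma_1} f|_{W_2} = \Lambda_{\gamma_2} f|_{W_2}$ for all $f \in C_c^\infty(W_1)$.

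For the construction of $m_1$: since $W_1, W_2 \Subset \Omega_e$ are open and disjoint, one can pick a nonzero nonnegative $\phi \in C_c^\infty(\R^n)$ with $\supp \phi \subset \Omega_e \setminus (\overline{W_1} \cup \overline{W_2})$. I would then define $m_1$ as the $s$-harmonic extension of $\phi|_{\Omega_e}$ into $\Omega$, i.e.\ $m_1 = \phi$ in $\Omega_e$ and $(-\Delta)^s m_1 = 0$ in $\Omega$, which is a well-posed Dirichlet problem for $\Omega$ bounded in one direction. The fractional maximum principle gives $m_1 \geq 0$ on $\R^n$, whence $\gamma_1 = (1+m_1)^2 \geq 1 > 0$, and the support condition $\supp m_1 \cap (W_1 \cup W_2) = \emptyset$ holds by the choice of $\phi$. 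The condition $m_1 - m_2 = m_1 \in H^s(\R^n)$ is built into the variational construction of $m_1$.

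The main technical obstacle is verifying the special regularity $m_1 \in H^{2s, n/(2s)}(\R^n)$ together with $\gamma_1 \in C^\infty(\R^n)$ in the unbounded setting. The $s$-harmonic extension of a smooth compactly supported exterior datum decays at infinity like the Riesz kernel, as $|x|^{-(n-2s)}$; membership in $L^{n/(2s)}(\R^n)$ therefore requires $(n-2s) \cdot n/(2s) > n$, equivalently $s < n/4$. This holds automatically when $n \geq 4$ since $s < 1 \leq n/4$, while for $n = 2, 3$ the explicit hypothesis $s \leq n/4$ in the theorem is exactly what guarantees this integrability — this is the origin of the dimensional restriction, and also why the authors' subsequent work cited in the footnote, which relaxes the regularity scale to $H^{s, n/s}$, can cover the remaining parameters $n/4 < s < 1$ for $n = 2,3$. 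Converting this decay into the full Bessel potential norm $H^{2s, n/(2s)}$ requires additional quantitative estimates, and ensuring $\gamma_1 \in C^\infty(\R^n)$ despite the characteristic $d(\cdot, \partial\Omega)^s$ boundary behavior of classical $s$-harmonic extensions is handled through a careful choice or modification of $\phi$ near $\partial \Omega$, paralleling the corresponding step in Theorem~\ref{thm: counterexample}.
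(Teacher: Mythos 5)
Your overall skeleton is the right one and matches the paper's: reduce via Theorem~\ref{thm: characterization of uniqueness}~\ref{item 2 characterization of uniqueness} to producing a single nontrivial $s$-harmonic background deviation supported away from $W_1\cup W_2$, with exterior datum a bump function placed in $\Omega_e\setminus\overline{W_1\cup W_2}$. (Your swap of the roles of $\gamma_1$ and $\gamma_2$ is harmless.) However, the execution has genuine gaps at exactly the three points you flag as ``technical obstacles,'' and the resolutions you propose for them would not work. The key missing idea is the paper's \emph{enlarge-then-mollify} device: one solves the Dirichlet problem not in $\Omega$ but in the enlarged set $\Omega_{2\epsilon}$, obtaining $\tilde m_2\in H^s(\R^n)$, and then sets $m_2 := C_\epsilon\,\rho_\epsilon\ast\tilde m_2$. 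Because $\rho_\epsilon\ast\phi\in C_c^\infty(\Omega_{2\epsilon})$ for every $\phi\in C_c^\infty(\Omega)$ and the fractional Laplacian commutes with convolution against a radial mollifier, $m_2$ is still $s$-harmonic in $\Omega$, and it is globally $C^\infty$. By contrast, your $m_1$ (the $s$-harmonic extension taken directly in $\Omega$) generically has only $d(\cdot,\partial\Omega)^s$ regularity across $\partial\Omega$; this boundary behavior is intrinsic to the fractional Dirichlet problem and cannot be removed by a ``careful choice or modification of $\phi$ near $\partial\Omega$,'' so $\gamma_1\in C^\infty(\R^n)$ fails for your construction as stated.

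The other two gaps are resolved by the same device, and not by the tools you invoke. For positivity, you appeal to a maximum principle on domains bounded in one direction; the paper cites such a principle only for bounded $C^0$ domains (\cite[Proposition 4.1]{rosoton2015nonlocal}) and explicitly avoids it in the unbounded case, instead normalizing $\|m_2\|_{L^\infty(\R^n)}\leq 1/2$ via Young's inequality $\|\rho_\epsilon\ast\tilde m_2\|_{L^\infty}\leq\|\rho_\epsilon\|_{L^2}\|\tilde m_2\|_{L^2}$ and the constant $C_\epsilon$, so that $\gamma_2^{1/2}=1+m_2\geq 1/2$ with no sign information needed. For the regularity $H^{2s,\frac{n}{2s}}(\R^n)$, your Riesz-kernel decay heuristic is an unproven pointwise estimate, would in any case only control the $L^{n/(2s)}$ norm rather than the full Bessel norm, and fails at the admissible endpoint $s=n/4$ (where $(n-2s)\cdot\frac{n}{2s}=n$ is borderline). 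The paper instead gets the regularity for free from the convolution structure: with $p:=\frac{2n}{n+4s}$ one has $\frac1p+\frac12=1+\frac{2s}{n}$, so Young's inequality applied to $(\vev{D}^{2s}\rho_\epsilon)\ast\tilde m_2$ with $\vev{D}^{2s}\rho_\epsilon\in L^p$ and $\tilde m_2\in L^2$ gives $m_2\in H^{2s,\frac{n}{2s}}(\R^n)$; the condition $p\geq 1$, i.e.\ $s\leq n/4$ for $n=2,3$, is where the dimensional restriction actually enters, including the endpoint. In short: you correctly located all three difficulties, but without the enlarge-then-mollify step none of them is actually overcome.
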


\begin{remark} The constructions given in the proofs of Theorem~\ref{thm: counterexample} and \ref{thm: counterexample 2} actually show that there are uncountably many different conductivities which have equal partial exterior DN data. In the setting of Theorem~\ref{thm: counterexample} one can also take these conductivies such that they take the constant value one, $\gamma(x) = 1$, outside a bounded set. We also remark that the constructed conductivities are really counterexamples rather than some expected gauge associated with the equation and the inverse problem since such obstructions for the uniqueness are not present when $W_1 \cap W_2 \neq \emptyset$.
\end{remark}

\begin{remark} If $W, X \subset \R^n$ are open sets, we write $W \Subset X$ whenever $\overline{W} \subset X$ and $\overline{W}$ is compact. In Theorems~\ref{thm: counterexample} and \ref{thm: counterexample 2} one may replace the assumption $W_1,W_2 \Subset \Omega_e$ by the weaker condition that $\overline{W_1 \cup W_2} \subset \Omega_e$ and 
$$\mathrm{dist}(W_1\cup W_2,\Omega) \vcentcolon = \inf_{x \in W_1\cup W_2, y \in \Omega} \abs{x-y} > 0$$ and the conclusion would still remain true. This can be obtained by making minor changes in the proofs.
\end{remark}

In the following subsection, we discuss earlier literature on the related problems in more details. Preliminaries and basic notation of fractional order Sobolev spaces and operators are recalled in Section \ref{sec: preliminaries}. We discuss the invariance of exterior DN data in Section \ref{sec: invariance of data} as a formal preparation for the counterexamples to uniqueness, which are constructed in Section \ref{sec: counterexamples}. We point to the article \cite{RZ2022unboundedFracCald} for the details how to prove the characterization of Theorem \ref{thm: characterization of uniqueness} that plays a fundamental role behind the constructions of counterexamples given in this article.

\subsection{On the earlier literature}

We briefly introduce some of the important contributions related to the fractional Calderón problems. We also discuss on nonuniqueness in other Calderón type inverse problems. This discussion is far from complete and more references can be found from the discussed works.

\subsubsection{Fractional Calderón problems for perturbations}

The study of fractional Calderón problems started from the celebrated work of Ghosh, Salo and Uhlmann \cite{GSU20} where they showed that the knowledge of the partial exterior DN map associated with the fractional Schrödinger equation, $0 < s < 1$,
\begin{equation}
\begin{split}\label{eq:fracSchördinger}
    ((-\Delta)^s+q)u &= 0\quad \text{in}\quad \Omega, \\
    u&=f\quad \text{in}\quad \Omega_e
\end{split}
\end{equation}
determines the potential $q \in L^\infty(\Omega)$ uniquely. The solution to the inverse problem is based on the unique continuation property of the fractional Laplacian and a clever observation that it implies the Runge approximation property for the equation, which in turn can be used to solve the inverse problem. This and related problems have been studied actively ever since. It is known that one may determine the potential $q$ using just a single measurement \cite{GRSU-fractional-calderon-single-measurement,R-Singular-measurement} and the inverse problem has logarithmic stability \cite{RS-fractional-calderon-low-regularity-stability}. This stability estimate is also known to be optimal \cite{RS-exponential-instability,RS-Instability}. One may replace $q$ by a general local bilinear form $Q$, $(-\Delta)^s$ by a general nonlocal bilinear form $L$ having the unique continuation property, and solve the inverse problem on unbounded domains whenever the forward problem is well-posed \cite{RZ2022unboundedFracCald}. For example, $Q$ may be taken to be a Sobolev multiplier \cite{RS-fractional-calderon-low-regularity-stability} or given by a general lower order partial differential operator (PDO) \cite{CMRU20-higher-order-fracCald}, and $L$ can be given by a fractional power of an elliptic operator \cite{GLX-calderon-nonlocal-elliptic-operators,ghosh2021calderon}. One may also solve the related inverse problems for higher order equations, $s \in \R_+ \setminus \Z$, due to the work \cite{CMR20}. There are also examples where the inverse problem can be solved when $Q$ is given by a linear PDO having higher order than the fractional Laplacian \cite{RZ2022unboundedFracCald}. One can also replace $q$ in some cases by local nonlinear PDOs \cite{LL-fractional-semilinear-problems}. One may also consider linear perturbations $q$, which are assumed to be quasilocal operators or operators of finite propagation \cite{covi2021uniquenessQuasiLocal}. Unique continuation properties for general classes of operators and inverse problems for their perturbations are studied recently in \cite{covietal2021calderon-directionally-antilocal}.

\subsubsection{Fractional Calderón problems for variable coefficient operators}
We next discuss the Calderón problem for the fractional conductivity equation and other variable coefficient operators, rather than for nonlocal models with perturbations. The inverse problem for the conductivity equation \eqref{eq: frac cond eq} was studied first by Covi in \cite{covi2019inverse-frac-cond} where it is proved that if $\Omega$ is a bounded Lipschitz domain, $\gamma_1=\gamma_2 \equiv 1$ in $\Omega_e$ and any partial exterior DN data agrees (even disjoint $W_1$ and $W_2$ are possible), then $\gamma_1=\gamma_2$ in $\R^n$. The proof in \cite{covi2019inverse-frac-cond} is based on introducing the fractional Liouville transformation between the equations \eqref{eq: frac cond eq} and \eqref{eq:fracSchördinger}, which works analogously to the classical Calderón problem \cite{SU87-CalderonProblem-annals}. The assumption of \cite{covi2019inverse-frac-cond} is analogous to the classical Calderón problem under the asssumption that the conductivity takes the constant value one on the boundary. The assumption of Theorem \ref{thm: characterization of uniqueness} is analogous to the assumption that conductivities take the constant value one only on the sets where we measure and the counterexamples of Theorem \ref{thm: counterexample} work under such assumptions. A closely related fractional Schrödinger equation with magnetic field was studied and introduced in \cite{CO-magnetic-fractional-schrodinger}. Its higher order versions and higher order fractional gradient operators were studied recently in \cite{CMR20}. There are also other studies related to fractional magnetic operators with variable coefficients (see~e.g.~ \cite{LILI-fractional-magnetic-calderon} and references therein). Inverse problems for fractional diffusion equations with variable coefficients has been studied in \cite{LiLi-power-type-nonlin}. The Calderón problem for the fractional spectral Laplacian on closed Riemannian manifolds was recently solved in \cite{feizmohammadi2021fractional,feizmohammadiEtAl2021fractional}. The Calderón problem for the coefficients of a general fractional power of an elliptic second order operator from its exterior data was recently studied in \cite{ghosh2021calderon}.

\subsubsection{Nonuniqueness and gauge invariance in (fractional) Calderón problems}

Anisotropic Calderón problems and Calderón problems on Riemannian manifolds have always some natural gauge invariance, which keeps the DN data invariant (see~e.g.~ \cite{UH-inverse-problems-seeing-the-unseen}). This gauge invariance is given in terms of some diffeomorphisms that fix the boundary. There is also another type of gauge invariance for magnetic potentials in the magnetic Calderón problem (see~e.g.~ \cite{krupchyk2012uniqueness}). Similar gauge has been observed in the fractional Calderón problem for general elliptic operators \cite{ghosh2021calderon} and on Riemannian manifolds \cite{feizmohammadi2021fractional,feizmohammadiEtAl2021fractional}. The magnetic fractional Calderón problem studied in \cite{CO-magnetic-fractional-schrodinger} enjoys also a gauge analogously to its classical counterparts. In all of these problems, this kind of gauge is, more or less, expected by the physical (or mathematical) structure of the problem and they should not be considered as counterexamples to the uniqueness in the usual sense.

Counterexamples to the anisotropic Calderón problem has been studied recently on Riemannian manifolds in the series of works \cite{DKF-Survey-Non-uniqueness,DKN-Non-Uniqueness-Aniso,DKN-Hidden-Mechanism-Non-Uniqueness,DKN2019nonuniqueness,DKN-anisotropic-Calderon}. There are also recent counterexamples to inverse problems for the wave equation \cite{LO-Counterex-Wave-eq}. Counterexamples to Calderón problems with sufficiently irregular coefficients are known to exist, see~e.g.~ a discussion about quantum shielding \cite[Section 3.1]{UH-inverse-problems-seeing-the-unseen} and the original works \cite{Greenleaf2001TheCP,Greenleaf2003OnNF}. See also \cite{Greenleaf10169,GKLU-Invisibility} and references therein for extensive studies of invisibility in inverse problems, transformation optics and their possible applications, including invisibility cloaking. This article is the first work on counterexamples to the fractional Calderón problems to the best of our knowledge.

\subsection*{Acknowledgements} J.R. was supported by the Vilho, Yrjö and Kalle Väisälä Foundation of the Finnish Academy of Science and Letters.

\section{Preliminaries} \label{sec: preliminaries}

In this section, we introduce the basic notation and known basic properties used in this article. In particular, we introduce the (local) Bessel potential spaces, the fractional Laplacian, the fractional gradient and the fractional divergence, which are needed to set up the inverse problem for the fractional conductivity equation. 

\subsection{Function spaces}

Throughout the article $\Omega, F \subset \R^n$ are open and closed sets, respectively. The space of Schwartz functions is denoted by $\schwartz(\R^n)$ and its dual space, the space of tempered distributions, by $\tempered(\R^n)$. On the space of Schwartz functions the Fourier transformation acts as an isomorphism and is given by
\[
    \fourier u(\xi)\vcentcolon = \int_{\R^n} u(x)e^{-ix \cdot \xi} \,dx
\]
for $u\in\schwartz(\R^n)$, which we frequently also denote by $\hat{u}$. By duality the Fourier transform is also an isomorphism on $\tempered(\R^n)$ and the inverse of the Fourier transform will be denoted by $\ifourier$. 

Next we introduce the Bessel potential spaces $H^{s,p}(\R^n)$ and different local versions of them which can be seen heuristically as having vanishing trace. We define the Bessel potential of order $s \in \R$ as a Fourier multiplier $\vev{D}^s\colon \tempered(\R^n) \to \tempered(\R^n)$ by
\begin{equation}\label{eq: Bessel pot}
    \vev{D}^s u \vcentcolon = \ifourier(\vev{\xi}^s\hat{u}),
\end{equation} 
where $\vev{\xi}\vcentcolon = (1+|\xi|^2)^{1/2}$. For any $1 \leq p < \infty$ and $s \in \R$ we denote the Bessel potential space $H^{s,p}(\R^n)$ by
\begin{equation}
\label{eq: Bessel pot spaces}
    H^{s,p}(\R^n) \vcentcolon = \{\, u \in \tempered(\R^n)\,;\, \vev{D}^su \in L^p(\R^n)\,\}
\end{equation}
which is endowed with the norm $\norm{u}_{H^{s,p}(\R^n)} \vcentcolon = \norm{\vev{D}^su}_{L^p(\R^n)}$. We will use the following variants of \emph{local Bessel potential spaces}:
\begin{equation}\begin{split}\label{eq: local bessel pot spaces}
    \widetilde{H}^{s,p}(\Omega) &\vcentcolon = \overline{C_c^\infty(\Omega)}^{H^{s,p}(\R^n)},\\
    H_F^{s,p}(\R^n) &\vcentcolon =\{\,u \in H^{s,p}(\R^n)\,;\, \supp(u) \subset F\,\}.
    \end{split}
\end{equation}
The spaces $\widetilde{H}^{s,p}(\Omega)$, $H_F^{s,p}(\R^n)$ are closed subspaces of $H^{s,p}(\R^n)$. As usual we set $H^s \vcentcolon = H^{s,2}$. Moreover, for any $s\geq 0$ we set
\[
    \Hcirc(\Omega)=\{\,u \in H^{s}(\R^n)\,;\, u=0\,\,\text{a.e. in}\,\,\Omega^c\,\}
\]
and it is again a closed subspace of $H^s(\R^n)$. 

\subsection{Fractional Laplacian and fractional gradient}

For any tempered distribution $u\in\tempered(\R^n)$, we define the fractional Laplacian of order $s>0$ as the Fourier multiplier
\[
    (-\Delta)^su\vcentcolon = \ifourier(|\xi|^{2s}\hat{u}),
\]
if the right hand side is well-defined. One can show that the fractional Laplacian is a well-defined bounded linear operator $(-\Delta)^{s}\colon H^{t,p}(\R^n) \to H^{t-2s,p}(\R^n)$ for all $1 \leq p < \infty$, $s \geq 0$ and $t \in \R$. Moreover, if $u\in\schwartz(\R^n)$ and $0<s<1$, then the fractional Laplacian can equivalently be calculated as the singular integral (see~e.g.~~\cite[Section 3]{DINEPV-hitchhiker-sobolev})
\[
    (-\Delta)^su(x)=C_{n,s}\,\text{p.v.}\int_{\R^n}\frac{u(x)-u(y)}{|x-y|^{n+2s}}\,dy=C_{n,s}\,\lim_{\epsilon\to 0}\int_{\R^n\setminus B_{\epsilon}(x)}\frac{u(x)-u(y)}{|x-y|^{n+2s}}\,dy
\]
or as an integral of a weighted second order difference quotient
\[
    (-\Delta)^su(x)=-\frac{C_{n,s}}{2}\int_{\R^n}\frac{u(x+y)+u(x-y)-2u(x)}{|y|^{n+2s}}\,dy,
\]
where the constant $C_{n,s}$ is given by
\[
    C_{n,s}=\left(\int_{\R^n}\frac{1-\cos(x_1)}{|x|^{n+2s}}\,dx\right)^{-1}<\infty.
\]
Next we recall the notion of fractional gradient and divergence (see~e.g.~\cite{covi2019inverse-frac-cond, NonlocDiffusion, RZ2022unboundedFracCald}). For any $0<s<1$ the fractional gradient of order $s$ is the bounded linear operator $\nabla^s\colon H^s(\R^n)\to L^2(\R^{2n};\R^n)$ given by (cf.~\cite[Propositions 3.4 and 3.6]{DINEPV-hitchhiker-sobolev})
    \[
        \nabla^su(x,y)=\sqrt{\frac{C_{n,s}}{2}}\frac{u(x)-u(y)}{|x-y|^{n/2+s+1}}(x-y)
    \]
    with
    \begin{equation}
    \label{eq: bound on fractional gradient}
        \|\nabla^su\|_{L^2(\R^{2n})}=\|(-\Delta)^{s/2}u\|_{L^2(\R^n)}\leq \|u\|_{H^s(\R^n)}
    \end{equation}
    where we write in the norms simply $L^2(\R^{2n})$ in the place of $L^2(\R^{2n};\R^n)$.
    The adjoint operator of the fractional gradient is called the fractional divergence and it is the linear bounded operator $\Div_s\colon L^2(\R^{2n};\R^n)\to H^{-s}(\R^n)$ given by
    \[
        \langle \Div_s(u),v\rangle_{H^{-s}(\R^n)\times H^s(\R^n)}=\langle u,\nabla^sv\rangle_{L^2(\R^{2n})}
    \]
    for all $u\in L^2(\R^{2n};\R^n),v\in H^s(\R^n)$. Moreover, one can show that (cf.~\cite[Section 8]{RZ2022unboundedFracCald})
    \[
        \|\Div_s(u)\|_{H^{-s}(\R^n)}\leq \|u\|_{L^2(\R^{2n})}
    \]
    for all $u\in L^2(\R^{2n};\R^n)$ and there holds (cf.~\cite[Lemma 2.1]{covi2019inverse-frac-cond}) $(-\Delta)^su=\Div_s(\nabla^su)$ in the weak sense for all $u\in H^s(\R^n)$.

\section{Invariance of data}\label{sec: invariance of data}

    Next we want to make the point of Theorem~\ref{thm: characterization of uniqueness} \ref{item 2 characterization of uniqueness} on the invariance of data more precise. Fix some $\gamma_1\in L^{\infty}(\R^n)$ with background deviation $m_1\in H^{2s,\frac{n}{2s}}(\R^n)$ satisfying the assumptions of Theorem~\ref{thm: characterization of uniqueness}, let $W_1,W_2\subset \Omega_e$ be two nonempty, disjoint, open sets and introduce the operators
\[
    T_{\gamma_1}\colon X\to H^{2s,n/2s}(\R^n)+H^s(\R^n),\quad m_0\mapsto T_{\gamma_1}m_0\vcentcolon= m_1-m
\]
and 
\[
    S_{\gamma_1}\colon X\to L^1_{loc}(\R^n),\quad S_{\gamma_1}m_0\vcentcolon = T_{\gamma_1}m_0+1,
\]
where $m\in H^s(\R^n)$ is the unique solution to \eqref{eq: PDE uniqueness cond eq main thm}. Next define $Z_F\vcentcolon = T_{\gamma_1}^{-1}(Y_F)$, where $F\subset \R^n$ is a closed set, $Y_F\vcentcolon = H^{2s,\frac{n}{2s}}_F(\R^n)$, and set
\[
    A_{\gamma_1,W_1\cup W_2}\vcentcolon = \bigcup_{\alpha>0}(S_{\gamma_1}(Z_{(W_1\cup W_2)^c})\cap L^{\infty}_{\alpha}(\R^n)),
\]
where $\Gamma\in L^{\infty}_{\alpha}(\R^n)$ if $\Gamma\in L^{\infty}(\R^n)$ and $\Gamma(x)\geq \alpha$ for a.e. $x\in\R^n$. 

Note that $\gamma_1^{1/2}\in A_{\gamma_1,W_1\cup W_2}$. In fact, by the assumptions, we know $\gamma_1^{1/2}\in L^{\infty}_{\gamma_0^{1/2}}(\R^n)$ and $\gamma_1^{1/2}\in S_{\gamma_1}(Z_{(W_1\cup W_2)^c})$ holds since it is equivalent to $m_1\in Y_{(W_1\cup W_2)^c}\cap T_{\gamma_1}(X)$ but this is clear by the construction of $T_{\gamma_1}$. 

Moreover, if $\Gamma_2\in A_{\gamma_1,W_1\cup W_2}$, then $\gamma_2\vcentcolon=\Gamma_2^2$ satisfies the assumptions of Theorem~\ref{thm: characterization of uniqueness} for some $\alpha_0>0$ and is a solution to \eqref{eq: PDE uniqueness cond eq main thm}. Therefore, using \ref{item 2 characterization of uniqueness} of Theorem~\ref{thm: characterization of uniqueness}, we deduce that $\left.\Lambda_{\gamma_1}f\right|_{W_2}=\left.\Lambda_{\gamma_2}f\right|_{W_2}$ for all $f\in C_c^{\infty}(W_1)$. On the other hand, if $\gamma_2$ fulfills the assumptions of Theorem~\ref{thm: characterization of uniqueness} and $\left.\Lambda_{\gamma_1}f\right|_{W_2}=\left.\Lambda_{\gamma_2}f\right|_{W_2}$ for all $f\in C_c^{\infty}(W_1)$, then again by \ref{item 2 characterization of uniqueness} of Theorem~\ref{thm: characterization of uniqueness} it follows that $\Gamma_2\vcentcolon=\gamma_2^{1/2} \in A_{\gamma_1,W_1\cup W_2}$. Hence, if we can show that $A_{\gamma_1,W_1\cup W_2}\setminus\{\gamma_1^{1/2}\}$ is nonemtpy for some $\gamma_1$ satisfying the assumptions of Theorem~\ref{thm: characterization of uniqueness}, then we have constructed a counterexample for uniqueness of the inverse problem related to the fractional conductivity equation.

In particular, $A_{\gamma_1,W_1\cup W_2}$ is the largest set of sufficiently regular conductivities (when squared) so that the characterization of Theorem~\ref{thm: characterization of uniqueness} is known to apply, and the exterior data stays invariant.

\section{Construction of counterexamples}\label{sec: counterexamples}

In this final section of the article, we construct the counterexamples. We make a few remarks about the construction now. First of all, using some general arguments and regularity theory it seems to be extremely difficult, if even possible, to construct counterexamples and verify the required regularity properties even in the case of a bounded domain. For this reason, we had to came up with our construction as it is. The construction shows that there is a lot of freedom \emph{how to start} constructing counterexamples but the construction here gives only smooth counterexamples. However, less regular counterexamples could be also constructed by using rougher radial mollifiers. The construction does not have a good control of $\gamma$ near $\partial \Omega$ or even in $\Omega$. Therefore it cannot be used, at least directly, to show that for \emph{all} nonempty, disjoint, open sets $W_1,W_2 \subset \Omega_e$ one can also construct counterexamples to uniqueness. Moreover, our proof of Theorem~\ref{thm: counterexample} uses the maximum principle on bounded sets and, therefore, the argument may not directly hold on the domains that are bounded in one direction. In the proof of Theorem~\ref{thm: counterexample 2}, we apply a similar strategy to construct counterexamples for domains $\Omega$ which are bounded in one direction as in the proof of Theorem~\ref{thm: counterexample}, but this time using a scaling argument and basic estimates instead of the boundedness of $\Omega$. Lastly, without knowing Theorem \ref{thm: characterization of uniqueness} we expect it would be a hard problem to verify directly that the constructed counterexamples are really counterexamples to uniqueness, i.e. that the partial exterior DN maps coincide.

\begin{proof}[Proof of Theorem \ref{thm: counterexample}]
    Throughout the proof we denote by $A_{\delta}$ the open $\delta-$neighborhood for any set $A\subset\R^n$ (this should not be confused with the notation $\Omega_e$ for the exterior), $\delta>0$ and by $(\rho_{\epsilon})_{\epsilon>0}$ the standard mollifiers. First assume that the conductivities $\gamma_1,\gamma_2\in L^{\infty}(\R^n)$ with background deviations $m_1,m_2\in H^{2s,\frac{n}{2s}}(\R^n)\cap H^s(\R^n)$ satisfy the assumptions of Theorem~\ref{thm: characterization of uniqueness} for some $\gamma_0>0$ and $m\vcentcolon =m_1-m_2\in H^s(\R^n)$ is a solution to \eqref{eq: PDE uniqueness cond eq main thm} with $m_0\in H^s(\R^n)$. Then a direct computation shows that $m_2\in H^s(\R^n)$ solves 
    \[
    (-\Delta)^sm_2-\frac{(-\Delta)^sm_1}{\gamma_1^{1/2}}m_2=\frac{(-\Delta)^sm_1}{\gamma_1^{1/2}}\quad \text{in}\quad \Omega,\quad m_2=m_1-m_0\quad\text{in}\quad \Omega_e.
    \]
    In fact, there holds
    \begin{align*}
        0&=(-\Delta)^sm-\frac{(-\Delta)^sm_1}{\gamma_1^{1/2}}m\\
        &=(-\Delta)^sm_1-(-\Delta)^sm_2-\frac{(-\Delta)^sm_1}{\gamma_1^{1/2}}m_1+\frac{(-\Delta)^sm_1}{\gamma_1^{1/2}}m_2\\
        &=(-\Delta)^sm_1-(-\Delta)^sm_2-\frac{(-\Delta)^sm_1}{\gamma_1^{1/2}}(\gamma_1^{1/2}-1)+\frac{(-\Delta)^sm_1}{\gamma_1^{1/2}}m_2\\
        &=-(-\Delta)^sm_2+\frac{(-\Delta)^sm_1}{\gamma_1^{1/2}}+\frac{(-\Delta)^sm_1}{\gamma_1^{1/2}}m_2
    \end{align*}
    in $\Omega$. Next let $\gamma_1\equiv 1$. Then $m_2\in H^s(\R^n)$ solves
    \begin{equation}
    \label{eq: constr m}
        (-\Delta)^sm_2=0\quad \text{in}\quad \Omega,\quad m_2=m_0\quad\text{in}\quad \Omega_e,
    \end{equation}
    where we replaced $m_0$ by $-m_0$ for later convenience. 
    
    Next choose $\omega\Subset \Omega_e\setminus\overline{W_1\cup W_2}$ and fix $\epsilon>0$ such that $\Omega_{5\epsilon},\omega_{5\epsilon}\subset \R^n\setminus (W_1\cup W_2)$ are disjoint. Now we may take a smooth open set $\Omega'\subset \R^n$ such that $\overline{\Omega}_{2\epsilon}\subset\Omega'\subset \Omega_{3\epsilon}$ and a nonnegative cut-off function $\eta\in C_c^{\infty}(\omega_{3\epsilon})$ with $\eta|_{\overline{\omega}_{2\epsilon}}=1$. These sets are graphically illustrated in Figure~\ref{fig: Geometric setting 2}. By the Lax--Milgram theorem there is a unique solution $\tilde{m}_2\in H^s(\R^n)$ to
    \begin{equation}
    \label{eq: PDE in extended domain}
        (-\Delta)^s\tilde{m}_2=0\quad \text{in}\quad \Omega',\quad \tilde{m}_2=\eta\quad\text{in}\quad \Omega'_e.
    \end{equation}
    Note that $\tilde{m}_2=\eta$ a.e. in $\R^n\setminus\Omega'$ and hence $\supp(\tilde{m}_2)\subset \Omega'\cup \omega_{3\epsilon}$. By \cite[Lemma 3.15]{chandlerwilde2017sobolev} we have $\tilde{H}^s(\Omega')=\Hcirc(\Omega')$ since this is true for all $C^0$ domains, and hence our notion of weak solutions coincide with the one used in \cite{rosoton2015nonlocal}. Therefore, as $\eta\geq 0$, we can apply \cite[Proposition 4.1]{rosoton2015nonlocal} to conclude that $\tilde{m}_2\geq 0$ a.e. in $\R^n$. 
    
    By the mapping properties of the fractional Laplacian and mollification we have
    \[
        (-\Delta)^{s/2}\phi\in L^p(\R^n)\quad \text{and}\quad \supp(\rho_{\epsilon}\ast \phi)\subset \overline{B_{\epsilon}}+\supp(\phi)\subset \Omega_{2\epsilon}\subset \Omega'
    \]
for any $\phi\in C_c^{\infty}(\Omega)$ and any $1\leq p\leq \infty$. Therefore, for any $\phi\in C_c^{\infty}(\Omega)$ we can test \eqref{eq: PDE in extended domain} with $\rho_{\epsilon}\ast \phi\in C_c^{\infty}(\Omega')$ and obtain
\[
\begin{split}
    0&=\int_{\R^n}(-\Delta)^{s/2}\tilde{m}_2(-\Delta)^{s/2}(\rho_{\epsilon}\ast \phi)\,dx=\int_{\R^n}(-\Delta)^{s/2}\tilde{m}_2(\rho_{\epsilon}\ast (-\Delta)^{s/2}\phi)\,dx\\
    &=\int_{\R^n}(\rho_{\epsilon}\ast(-\Delta)^{s/2}\tilde{m}_2) (-\Delta)^{s/2}\phi\,dx=\int_{\R^n}(-\Delta)^{s/2}(\rho_{\epsilon}\ast\tilde{m}_2) (-\Delta)^{s/2}\phi\,dx,
\end{split}
\]
where we have used that the mollifier $\rho_{\epsilon}$ is radial and by Young's inequality the fractional Laplacian satisfies
\begin{equation}
\label{eq: fractional Lap commutes with conv}
    (-\Delta)^{s/2}(\rho_{\epsilon}\ast u)=\rho_{\epsilon}\ast (-\Delta)^{s/2}u,\, (-\Delta)^{s/2}(\rho_{\epsilon}\ast v)=((-\Delta)^{s/2}\rho_{\epsilon})\ast v\in L^q(\R^n)
\end{equation}
for all $u\in H^{s,p}(\R^n),v\in L^p(\R^n)$ and $1\leq p,q\leq \infty$ with $q\geq p$. Hence, we have shown that $m_2\vcentcolon = \tilde{m}_2\ast \rho_{\epsilon}\in H^s(\R^n)$ solves
\[
(-\Delta)^sm_2=0\quad \text{in}\quad \Omega,\quad m_2=\tilde{m}_2\ast \rho_{\epsilon}\quad\text{in}\quad \Omega_e.
\]
Furthermore, note that $m_2$ has the following properties:
\begin{enumerate}[(i)]
    \item\label{item 1 m} $m_2\geq 0$ a.e. in $\R^n$,
    \item\label{item 2 m} $m_2\in L^{\infty}(\R^n)$,
    \item\label{item 3 m} $m_2\in H^{s}(\R^n)\cap H^{2s,\frac{n}{2s}}(\R^n)$
    \item\label{item 4 m} and $\supp(m_2)\subset \R^n\setminus(W_1\cup W_2)$.
\end{enumerate}
The first assertion \ref{item 1 m} follows from $\tilde{m}_2\geq 0$ and $\rho_{\epsilon}\geq 0$. Next observe that $\supp(\tilde{m}_2)\subset \Omega'\cup \omega_{3\epsilon}$ implies $\supp(m_2)\subset \supp(\rho_{\epsilon}\ast \tilde{m}_2)\subset \overline{B_{\epsilon}}+\supp(\tilde{m}_2)\subset \Omega_{5\epsilon}\cup \omega_{5\epsilon}\subset\R^n\setminus(W_1\cup W_2)$ and therefore $m_2\in C_c^{\infty}(\R^n)$. Hence, we have shown that the statements \ref{item 2 m}--\ref{item 4 m} hold. Moreover, the support conditions imply that $\gamma_2=1$ in $W_1\cup W_2$.

This implies that the conductivity $\gamma_2$ defined by $\gamma_2^{1/2}\vcentcolon = m_2+1$ and the background deviation $m_2$ satisfy all required properties but $\gamma_1\neq \gamma_2$. Now since $W_1,W_2\subset \Omega_e$ are two disjoint open sets, we have found two conductivities $\gamma_1,\gamma_2$ satisfying the properties of Theorem~\ref{thm: characterization of uniqueness} and $m\vcentcolon = m_1-m_2$ solving \eqref{eq: PDE uniqueness cond eq main thm}, which in turn implies that the induced DN maps satisfy $\Lambda_{\gamma_1}f|_{W_2}=\Lambda_{\gamma_2}f|_{W_2}$ for all $f\in C_c^{\infty}(W_1)$.
\end{proof}

 \begin{figure}[!ht]
    \centering
    \begin{tikzpicture}[scale=1.4]
    \draw[color=cyan!50, fill=cyan!10] plot [smooth cycle, tension=0.05] coordinates {(0,0) (0,3) (2,3) (2,2)  (1,2) (1,1) (2,1) (2,0)};
    \filldraw[color=red!60, fill=red!20, opacity = 0.4] plot [smooth cycle, tension=0.2] coordinates {(-0.12,-0.1) (-0.12,3.1) (2.12,3.12) (2.11,1.85) (1.15,1.85) (1.15,1.15) (2.12,1.15) (2.12,-0.15) (1,-0.19)};
    \draw[color=cyan!50] plot [smooth cycle, tension=0.05] coordinates { (-0.1,-0.1) (-0.1,3.1) (2.1,3.1) (2.1,1.9) (1.1,1.9) (1.1,1.1) (2.1,1.1) (2.1,-0.1)};
    \draw[color=cyan!50] plot [smooth cycle, tension=0.05] coordinates {  (-0.2,-0.2)  (-0.2,3.2)  (2.2,3.2) (2.2,1.8)  (1.2,1.8) (1.2,1.2) (2.2,1.2) (2.2,-0.2)};
    \draw[color=cyan!50] plot [smooth cycle, tension=0.05] coordinates {   (-0.3,-0.3) (-0.3,3.3) (2.3,3.3) (2.3,1.7) (1.3,1.7) (1.3,1.3)  (2.3,1.3) (2.3,-0.3)};
    \draw[color=cyan!50] plot [smooth cycle, tension=0.05] coordinates {   (-0.4,-0.4) (-0.4,3.4) (2.4,3.4) (2.4,1.6) (1.4,1.6) (1.4,1.4) (2.4,1.4) (2.4,-0.4)};
    \draw[color=gray!50] (-0.03,2.1)--(-0.7,2.1);
    \node at (-0.95,2.1) {$\raisebox{-.35\baselineskip}{\ensuremath{\Omega}}$};
    \draw[color=gray!50] (-0.2,1.6)--(-0.7,1.6);
    \node at (-0.9,1.6) {$\raisebox{-.35\baselineskip}{\ensuremath{\Omega'}}$};
    \filldraw[color=orange!50, fill=orange!10](5.65,2.25) circle (0.7);
    \filldraw[color=blue!50, fill=blue!5, opacity=0.5] (7,1) ellipse (0.8 and 0.6);
    \node at (5.65,2.25) {$\raisebox{-.35\baselineskip}{\ensuremath{W_2}}$};
    \node at (7,1) {$\raisebox{-.35\baselineskip}{\ensuremath{W_1}}$};
    \filldraw[color=green!50, fill=green!20] (4,1) ellipse (0.3 and 0.7);
    \draw[color=green!50] (4,1) ellipse (0.4 and 0.8);
    \draw[color=green!50] (4,1) ellipse (0.5 and 0.9);
    \draw[color=green!50] (4,1) ellipse (0.6 and 1);
    \draw[color=green!50] (4,1) ellipse (0.7 and 1.1);
    \draw[color=green!50] (4,1) ellipse (0.8 and 1.2);
     \node at (4,1) {$\raisebox{-.35\baselineskip}{\ensuremath{\omega}}$};
\end{tikzpicture}\label{fig: Geometric setting 2}
    \caption{A graphical illustration of the sets used in the proof of Theorem~\ref{thm: counterexample}. Here $\Omega\subset \R^n$ is an arbitrary open bounded set and the measurements are performed in the disjoint open sets $W_1,W_2\subset\Omega_e$. In the proof, we construct in the first step a nonzero $s$-harmonic background deviation $\tilde{m}_2\in H^s(\R^n)$ in the set $\Omega'$, which has a smooth boundary and lies in the deformed annulus $\Omega_{3\epsilon}\setminus\overline{\Omega}_{2\epsilon}$, and then obtain by mollification a nonzero smooth $s-$harmonic function $m_2\vcentcolon =\tilde{m}_2\ast \rho_{\epsilon}$ in the set $\Omega$. The set $\omega\Subset\Omega_e\setminus\overline{W_1\cup W_2}$ is used to construct a cut--off function $\eta\in C_c^{\infty}(\omega_{3\epsilon})$ with $\eta|_{\overline{\omega}}=1$, which $\tilde{m}_2$ has as an exterior value, and in the end this function leads to the property that $m_2$ is nonnegative and has compact support contained in $\Omega_{5\epsilon}\cup \omega_{5\epsilon}$. Note that the topology of the sets $\Omega$ and $\omega$ could be also very complicated.}
\end{figure}
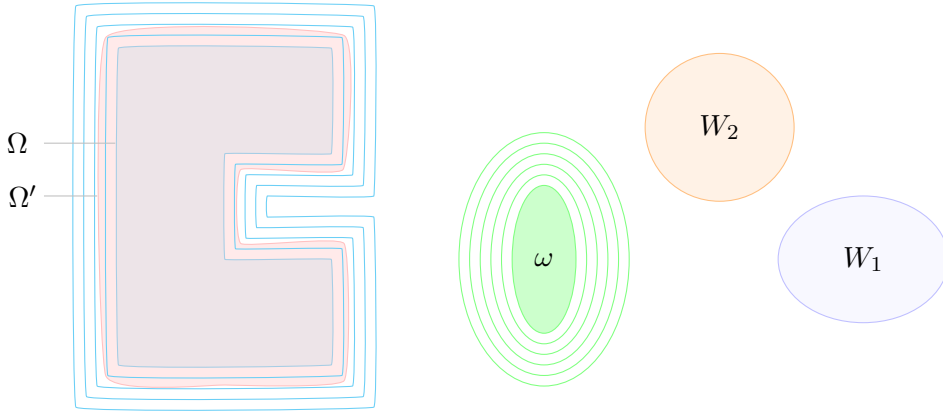

\begin{proof}[{Proof of Theorem~\ref{thm: counterexample 2}}]
    The construction in this case is very similar to the one of Theorem~\ref{thm: counterexample}. Without loss of generality, we can assume that $n\geq 2$, since the result for $n=1$ is already established in Theorem~\ref{thm: counterexample}. Let the sets $\Omega_{k\epsilon}, \omega,\omega_{k\epsilon}$ for $k\in\{1,\ldots,5\}$, $\epsilon>0$ and the cut--off function $\eta$ be defined as in the proof of Theorem~\ref{thm: counterexample}. 
    
    Let $\gamma_1\equiv 1$ and suppose $\tilde{m}_2\in H^s(\R^n)$ is the unique solution to  
    \begin{equation}
    \label{eq: PDE in unbounded domain}
        (-\Delta)^s\tilde{m}_2=0\quad \text{in}\quad \Omega_{2\epsilon},\quad \tilde{m}_2=\eta\quad\text{in}\quad \R^n\setminus\Omega_{2\epsilon}.
    \end{equation}
    As in the proof of Theorem~\ref{thm: counterexample}, the function $\tilde{m}_2$ uniquely exists by the Lax--Milgram theorem combined with the fractional Poincar\'e inquality on domains bounded in one direction (cf.~\cite[Theorem 2.2]{RZ2022unboundedFracCald}). By Young's inequality we have $\rho_{\epsilon}\ast \tilde{m}_2\in L^{\infty}(\R^n)$ with 
    \[
        \|\rho_{\epsilon}\ast \tilde{m}_2\|_{L^{\infty}(\R^n)}\leq \|\rho_{\epsilon}\|_{L^2(\R^n)}\|\tilde{m}_2\|_{L^2(\R^n)}
    \]
    and hence one easily verifies that the function 
    \[
        m_2\vcentcolon =C_{\epsilon}\rho_{\epsilon}\ast\tilde{m}_2\in H^s(\R^n)\quad\text{with}\quad C_{\epsilon}\vcentcolon=\frac{\epsilon^{n/2}}{2|B_1|^{1/2}\|\rho\|_{L^{\infty}(\R^n)}^{1/2}\|\tilde{m}_2\|_{L^2(\R^n)}}
    \]
    satisfies $\|m_2\|_{L^{\infty}(\R^n)}\leq 1/2$. Proceeding as in the proof of Theorem~\ref{thm: counterexample}, we deduce that $m_2\in H^s(\R^n)$ weakly solves
    \[
        (-\Delta)^sm=0\quad \text{in}\quad \Omega,\quad m=m_2\quad\text{in}\quad \Omega_e.
    \]
    
    If $n\geq 4$ or $n\geq 4s$ for $n=2,3$ we have $p\vcentcolon =\frac{2n}{n+4s}\geq 1$ and there holds $1/p+1/2=1+2s/n$. Therefore, by Young's inequality and the fact that the Bessel potential commutes with convolution we obtain $m_2\in H^{2s,\frac{n}{2s}}(\R^n)$. Moreover, by definition we have $\supp(\tilde{m}_2)\subset \overline{\Omega}_{2\epsilon}\cup\omega_{3\epsilon}$ and therefore there holds $\supp (m_2)\subset \supp(\tilde{m}_2)+\overline{B}_{2\epsilon}\subset\overline{\Omega}_{4\epsilon}\cup\omega_{5\epsilon}\subset\Omega_{5\epsilon}\cup\omega_{5\epsilon}$. In particular, $m_2$ vanishes in $W_1\cup W_2$. Since $\|m_2\|_{L^{\infty}(\R^n)}\leq 1/2$, the function defined by $\gamma_2^{1/2}\vcentcolon = 1+m_2\in L^{\infty}(\R^n)$ satisfies $\gamma_2^{1/2}\geq 1/2$. Therefore, the conductivities $\gamma_1,\gamma_2$ with background deviations $m_1,m_2$ satisfy all required properties and we can conclude the proof.
\end{proof}

\bibliography{countrefs} 

\newcommand{\etalchar}[1]{$^{#1}$}
\begin{thebibliography}{CWHM17}

\bibitem[CGFR21]{covietal2021calderon-directionally-antilocal}
Giovanni Covi, María~\'Angeles García-Ferrero, and Angkana Rüland.
\newblock On the {C}alder\'on problem for nonlocal {S}chr\"odinger equations
  with homogeneous, directionally antilocal principal symbols, 2021.
\newblock {a}rXiv:2109.14976.

\bibitem[CMR21]{CMR20}
Giovanni Covi, Keijo M\"{o}nkk\"{o}nen, and Jesse Railo.
\newblock Unique continuation property and {P}oincar\'{e} inequality for higher
  order fractional {L}aplacians with applications in inverse problems.
\newblock {\em Inverse Probl. Imaging}, 15(4):641--681, 2021.

\bibitem[CMRU22]{CMRU20-higher-order-fracCald}
Giovanni Covi, Keijo M\"{o}nkk\"{o}nen, Jesse Railo, and Gunther Uhlmann.
\newblock The higher order fractional {C}alder\'{o}n problem for linear local
  operators: uniqueness.
\newblock {\em Adv. Math.}, 399:Paper No. 108246, 29, 2022.

\bibitem[Cov20a]{CO-magnetic-fractional-schrodinger}
Giovanni Covi.
\newblock An inverse problem for the fractional {S}chr\"{o}dinger equation in a
  magnetic field.
\newblock {\em Inverse Problems}, 36(4):045004, 24, 2020.

\bibitem[Cov20b]{covi2019inverse-frac-cond}
Giovanni Covi.
\newblock Inverse problems for a fractional conductivity equation.
\newblock {\em Nonlinear Anal.}, 193:111418, 18, 2020.

\bibitem[Cov21]{covi2021uniquenessQuasiLocal}
Giovanni Covi.
\newblock Uniqueness for the fractional {C}alder\'on problem with quasilocal
  perturbations, 2021.
\newblock {a}rXiv:2110.11063.

\bibitem[CRZ22]{CRZ22}
Giovanni Covi, Jesse Railo, and Philipp Zimmermann.
\newblock The global inverse fractional conductivity problem.
\newblock 2022.
\newblock {a}rXiv:2204.04325.

\bibitem[CWHM17]{chandlerwilde2017sobolev}
S.~N. Chandler-Wilde, D.~P. Hewett, and A.~Moiola.
\newblock Sobolev spaces on non-{L}ipschitz subsets of {$\Bbb{R}^n$} with
  application to boundary integral equations on fractal screens.
\newblock {\em Integral Equations Operator Theory}, 87(2):179--224, 2017.

\bibitem[DGLZ12]{NonlocDiffusion}
Qiang Du, Max Gunzburger, R.~B. Lehoucq, and Kun Zhou.
\newblock Analysis and approximation of nonlocal diffusion problems with volume
  constraints.
\newblock {\em SIAM Rev.}, 54(4):667--696, 2012.

\bibitem[DKN18]{DKF-Survey-Non-uniqueness}
Thierry Daud\'{e}, Niky Kamran, and Francois Nicoleau.
\newblock A survey of non-uniqueness results for the anisotropic {C}alder\'{o}n
  problem with disjoint data.
\newblock In {\em Nonlinear analysis in geometry and applied mathematics.
  {P}art 2}, volume~2 of {\em Harv. Univ. Cent. Math. Sci. Appl. Ser. Math.},
  pages 77--101. Int. Press, Somerville, MA, 2018.

\bibitem[DKN19a]{DKN-Non-Uniqueness-Aniso}
Thierry Daud\'{e}, Niky Kamran, and Fran\c{c}ois Nicoleau.
\newblock Non-uniqueness results for the anisotropic {C}alder\'{o}n problem
  with data measured on disjoint sets.
\newblock {\em Ann. Inst. Fourier (Grenoble)}, 69(1):119--170, 2019.

\bibitem[DKN19b]{DKN-Hidden-Mechanism-Non-Uniqueness}
Thierry Daud\'{e}, Niky Kamran, and Fran\c{c}ois Nicoleau.
\newblock On the hidden mechanism behind non-uniqueness for the anisotropic
  {C}alder\'{o}n problem with data on disjoint sets.
\newblock {\em Ann. Henri Poincar\'{e}}, 20(3):859--887, 2019.

\bibitem[DKN20a]{DKN-anisotropic-Calderon}
Thierry Daud\'{e}, Niky Kamran, and Fran\c{c}ois Nicoleau.
\newblock The anisotropic {C}alder\'{o}n problem for singular metrics of warped
  product type: the borderline between uniqueness and invisibility.
\newblock {\em J. Spectr. Theory}, 10(2):703--746, 2020.

\bibitem[DKN20b]{DKN2019nonuniqueness}
Thierry Daud\'{e}, Niky Kamran, and Fran\c{c}ois Nicoleau.
\newblock On nonuniqueness for the anisotropic {C}alder\'{o}n problem with
  partial data.
\newblock {\em Forum Math. Sigma}, 8:Paper No. e7, 17, 2020.

\bibitem[DNPV12]{DINEPV-hitchhiker-sobolev}
Eleonora Di~Nezza, Giampiero Palatucci, and Enrico Valdinoci.
\newblock Hitchhiker's guide to the fractional {S}obolev spaces.
\newblock {\em Bull. Sci. Math.}, 136(5):521--573, 2012.

\bibitem[Fei21]{feizmohammadi2021fractional}
Ali Feizmohammadi.
\newblock Fractional {C}alder\'{o}n problem on a closed {R}iemannian manifold,
  2021.
\newblock {a}rXiv:2110.07500.

\bibitem[FGKU21]{feizmohammadiEtAl2021fractional}
Ali Feizmohammadi, Tuhin Ghosh, Katya Krupchyk, and Gunther Uhlmann.
\newblock Fractional anisotropic {C}alder\'on problem on closed {R}iemannian
  manifolds, 2021.
\newblock {a}rXiv:2112.03480.

\bibitem[GKL{\etalchar{+}}12]{Greenleaf10169}
Allan Greenleaf, Yaroslav Kurylev, Matti Lassas, Ulf Leonhardt, and Gunther
  Uhlmann.
\newblock Cloaked electromagnetic, acoustic, and quantum amplifiers via
  transformation optics.
\newblock {\em Proceedings of the National Academy of Sciences},
  109(26):10169--10174, 2012.

\bibitem[GKLU09]{GKLU-Invisibility}
Allan Greenleaf, Yaroslav Kurylev, Matti Lassas, and Gunther Uhlmann.
\newblock Invisibility and inverse problems.
\newblock {\em Bull. Amer. Math. Soc. (N.S.)}, 46(1):55--97, 2009.

\bibitem[GLU03a]{Greenleaf2001TheCP}
Allan Greenleaf, Matti Lassas, and Gunther Uhlmann.
\newblock The {C}alder\'{o}n problem for conormal potentials. {I}. {G}lobal
  uniqueness and reconstruction.
\newblock {\em Comm. Pure Appl. Math.}, 56(3):328--352, 2003.

\bibitem[GLU03b]{Greenleaf2003OnNF}
Allan Greenleaf, Matti Lassas, and Gunther Uhlmann.
\newblock On nonuniqueness for {C}alder\'{o}n's inverse problem.
\newblock {\em Math. Res. Lett.}, 10(5-6):685--693, 2003.

\bibitem[GLX17]{GLX-calderon-nonlocal-elliptic-operators}
Tuhin Ghosh, Yi-Hsuan Lin, and Jingni Xiao.
\newblock The {C}alder\'{o}n problem for variable coefficients nonlocal
  elliptic operators.
\newblock {\em Comm. Partial Differential Equations}, 42(12):1923--1961, 2017.

\bibitem[GRSU20]{GRSU-fractional-calderon-single-measurement}
Tuhin Ghosh, Angkana R\"{u}land, Mikko Salo, and Gunther Uhlmann.
\newblock Uniqueness and reconstruction for the fractional {C}alder\'{o}n
  problem with a single measurement.
\newblock {\em J. Funct. Anal.}, 279(1):108505, 42, 2020.

\bibitem[GSU20]{GSU20}
Tuhin Ghosh, Mikko Salo, and Gunther Uhlmann.
\newblock The {C}alder\'{o}n problem for the fractional {S}chr\"{o}dinger
  equation.
\newblock {\em Anal. PDE}, 13(2):455--475, 2020.

\bibitem[GU21]{ghosh2021calderon}
Tuhin Ghosh and Gunther Uhlmann.
\newblock The {C}alder\'{o}n problem for nonlocal operators, 2021.
\newblock {a}rXiv:2110.09265.

\bibitem[KSR21]{RS-Instability}
Herbert Koch, Mikko Salo, and Angkana R{\"u}land.
\newblock On instability mechanisms for inverse problems.
\newblock {\em Ars Inveniendi Analytica}, (7):93, 2021.

\bibitem[KU14]{krupchyk2012uniqueness}
Katsiaryna Krupchyk and Gunther Uhlmann.
\newblock Uniqueness in an inverse boundary problem for a magnetic
  {S}chr\"{o}dinger operator with a bounded magnetic potential.
\newblock {\em Comm. Math. Phys.}, 327(3):993--1009, 2014.

\bibitem[Li21]{LILI-fractional-magnetic-calderon}
Li~Li.
\newblock Determining the magnetic potential in the fractional magnetic
  {C}alder\'{o}n problem.
\newblock {\em Comm. Partial Differential Equations}, 46(6):1017--1026, 2021.

\bibitem[Li22]{LiLi-power-type-nonlin}
Li~Li.
\newblock An inverse problem for a fractional diffusion equation with
  fractional power type nonlinearities.
\newblock {\em Inverse Probl. Imaging}, 16(3):613--624, 2022.

\bibitem[LL22]{LL-fractional-semilinear-problems}
Ru-Yu Lai and Yi-Hsuan Lin.
\newblock Inverse problems for fractional semilinear elliptic equations.
\newblock {\em Nonlinear Anal.}, 216:Paper No. 112699, 2022.

\bibitem[LO22]{LO-Counterex-Wave-eq}
Tony Liimatainen and Lauri Oksanen.
\newblock Counterexamples to inverse problems for the wave equation.
\newblock {\em Inverse Probl. Imaging}, 16(2):467--, 2022.

\bibitem[RO16]{rosoton2015nonlocal}
Xavier Ros-Oton.
\newblock Nonlocal elliptic equations in bounded domains: a survey.
\newblock {\em Publ. Mat.}, 60(1):3--26, 2016.

\bibitem[RS18]{RS-exponential-instability}
Angkana R\"{u}land and Mikko Salo.
\newblock Exponential instability in the fractional {C}alder\'{o}n problem.
\newblock {\em Inverse Problems}, 34(4):045003, 21, 2018.

\bibitem[RS20]{RS-fractional-calderon-low-regularity-stability}
Angkana R\"{u}land and Mikko Salo.
\newblock The fractional {C}alder\'{o}n problem: low regularity and stability.
\newblock {\em Nonlinear Anal.}, 193:111529, 56, 2020.

\bibitem[R{\"{u}}l21]{R-Singular-measurement}
Angkana R{\"{u}}land.
\newblock On single measurement stability for the fractional {C}alder\'{o}n
  problem.
\newblock {\em SIAM J. Math. Anal.}, 53(5):5094--5113, 2021.

\bibitem[RZ22a]{RZ2022unboundedFracCald}
Jesse Railo and Philipp Zimmermann.
\newblock Fractional {C}alderón problems and {P}oincaré inequalities on
  unbounded domains, 2022.
\newblock {a}rXiv: 2203.02425.

\bibitem[RZ22b]{RZ22LowReg}
Jesse Railo and Philipp Zimmermann.
\newblock Low regularity theory for the inverse fractional conductivity
  problem, 2022.

\bibitem[SU87]{SU87-CalderonProblem-annals}
John Sylvester and Gunther Uhlmann.
\newblock A global uniqueness theorem for an inverse boundary value problem.
\newblock {\em Ann. of Math. (2)}, 125(1):153--169, 1987.

\bibitem[Uhl09]{UHL-electrical-impedance-tomography}
Gunther Uhlmann.
\newblock Electrical impedance tomography and {C}alder\'{o}n's problem.
\newblock {\em Inverse Problems}, 25(12):123011, 39, 2009.

\bibitem[Uhl14]{UH-inverse-problems-seeing-the-unseen}
Gunther Uhlmann.
\newblock Inverse problems: seeing the unseen.
\newblock {\em Bull. Math. Sci.}, 4(2):209--279, 2014.

\end{thebibliography}

\bibliographystyle{alpha}

\end{document}